\newcommand{\rset}{\mathbb{R}}
\newcommand{\bo}{\mathbf}
\newtheorem{definition}{Definiton}
\newtheorem{assumption}{Assumption}
\newtheorem{theorem}{Theorem}
\newtheorem{proposition}{Proposition}
\newtheorem{remark}{Remark}
\title{\LARGE \bf
Distributed model predictive control of leader-follower systems using
an interior point method with efficient computations}
\author{ Ion Necoara, Dragos N. Clipici and Sorin Olaru
\thanks{I. Necoara and  D.N. Clipici are with  University  Politehnica Bucharest,
Automatic Control and Systems Engineering Department, 060042
Bucharest, Romania. {\tt\small \{ion.necoara,
d.clipici\}@acse.pub.ro}. S. Olaru is with SUPELEC, Automatic
Control Department, Gif sur Yvette, France
 {\tt\small sorin.olaru@supelec.fr}.}}
\begin{document}
\maketitle

\begin{abstract}
Standard model predictive control strategies imply the online
computation of control inputs at each  sampling instance, which
traditionally limits this type of control scheme to systems with
slow dynamics. This paper focuses on distributed  model predictive
control for large-scale systems comprised of interacting linear subsystems,
where the online computations required for the control input can be
distributed amongst them. A model predictive controller based on a
distributed interior point method is derived, for which every
subsystem in the network can compute  stabilizing control inputs
using  distributed computations. We introduce local terminal sets
and cost functions, which together satisfy distributed invariance
conditions for the whole system, that guarantees   stability of the
closed-loop interconnected system. We show that the synthesis of
both terminal sets and terminal cost functions can be done in a
distributed framework.
\end{abstract}


\section{Introduction}
\label{introducere}

Model predictive control (MPC) is a well established method of
process control that has proven to be useful in numerous industrial
applications in the past decades. One of the advantages of MPC is
that it can be applied to large scale systems, with a considerable
number of states and inputs for which hard constraints are often
required \cite{c17,c16}.

MPC requires that the control input at each time step be calculated
by the online solution of an  optimization problem. As a result, one
of the drawbacks of MPC as a control algorithm is the delay
introduced by the computation time that is imposed for the
evaluation of functions, their first or second order derivatives,
and for matrix operations, computations that are usually required
for most optimization algorithms. This computational burden is also
worsened when MPC is implemented for a large-scale  plant of
interconnected subsystems, case where the dimension of the MPC
 problem is multiplied by the number of subsystems. For certain
industries for which the manufacturing process is slow in nature,
this computational time is not an issue.

However, multi-system applications have arisen where computing the
input rapidly is essential for efficiency and stability. Control
problems for networks of interconnected multi-agent systems such as
traffic control \cite{c15}, building anti-earthquake systems
\cite{c21} , satellite formation flight \cite{c23}, and wind turbine
farms \cite{c19}, have received plenty of interest in recent years.
Due to the large number of inputs and outputs of this class of
systems, distributed control is often required. Efficient
distributed optimization methods for solving such control problems
can be found in \cite{c1,c8,NecJPC,NecLNCS,c25}. From a practical
viewpoint, such methods can be sped up by implementing stronger,
more powerful computational hardware.
Recent results in \cite{c4,c9,JerCon:11,c24,NecJPC,NecLNCS,c3}
however, have shown that by exploiting the special underlying
structure of some MPC problems, the number of flops required for an
algorithm can be reduced substantially,  thus making MPC a more
attractive solution for control problems where speed is essential.
The authors in \cite{c9} propose an interior point method approach
for solving the MPC problem in which they use a discrete-time
Riccati recursion to solve the linear equations efficiently at each
iterate. In \cite{c4} the authors propose a more efficient approach
to linear algebra computations w.r.t the derivation given in
\cite{c3}. Computational burden can be also overcome by distributing
the necessary operations amongst different agents. To this purpose,
the authors in \cite{NecJPC} examine a distributed approach to
optimal control problems and appropriate optimization methods.

In this paper, we focus on extending these recent results on the
computational time required for the control action for MPC problems
with a special underlying structure arising in large-scale
leader-follower systems, where the computational burden is
distributed amongst the comprising subsystems, thus providing a
certain independence that is usually required for these subsystems.
In the first part of the paper, a stability analysis for
leader-follower systems is presented,  based on a linear feedback
law, that allows us to construct local terminal sets and cost
functions in a completely distributed way. Compared with the
existing approaches based on an end point constraint, we reduce the
conservatism by combining the underlying structure of the system
with distributed optimization. This leads to a larger region of
attraction for the controller. Then,  we formulate a distributed MPC
problem for this type of systems, using a terminal cost-terminal set
approach and an efficient implementation of an interior-point
algorithm using Mehrothra's predictor-corrector scheme for solving
the corresponding optimization problem is presented. In particular,
we show how the underlying Newton system can be solved in a
distributed manner.

The paper is organized as follows. In Section \ref{problema} we
present the formulation of the MPC problem corresponding to systems
of the leader-follower type and then we investigate the stability
issue for the current system in a distributed manner via a linear
feedback law using  a structured  Lyapunov function approach. In
Section \ref{sec2} we focus on decomposing the terminal state
constraints required for stability as a Cartesian product using
distributed set computations, after which we formulate the general
centralized MPC problem. We then show how to restructure the
original MPC problem in Section
 \ref{restructurare} as to provide computational benefits using
a distributed version of an interior-point algorithm presented in Section \ref{ipm}.


\section{Distributed MPC using the terminal-cost, terminal set approach}
\label{problema}

Large scale systems have attracted much interest from the control
systems community in recent decades.  In this paper, we focus on
large scale systems of leader-follower type. The MPC problem
associated with leader-follower systems can be found in a number of
current applications such as  platoons of vehicles \cite{c7}, which
is of great interest in the development of automated highway systems
 \cite{c15}, or in the renewable energy industry such as the
problem of controlling a wind turbine farm \cite{c19}. \\ \indent
Platoon or leader-follower systems imply that each subsystem, from
the second one onwards, is influenced by the previous. We consider
linear time invariant systems, for which the dynamics of the first
subsystem are:
\begin{align}
x_{t+1}^1=A^1 x_t^1 + B^1 u_t^1 \label{prob_first}.
\end{align}
The dynamics for the remaining $M-1$ subsystems are described by the
following linear equations:
\begin{align}
&x_{t+1}^i=A^i x_t^i + B^i u_t^i + A^{i, i-1} x_t^{i-1}+B^{i,i-1} u_t^{i-1},   \label{prob_din}
\end{align}
where $x_t^i \in \rset^{n_i}$ and $u_t^i \in \rset^{m_i}$ are the
state and input vectors of subsystem $i$ at  time $t$, $A^i \in
\rset^{n_i \times n_i}$ and $B^i \in \rset^{n_i \times m_i}$ are the
state and input dynamic matrices for subsystem $i$, while $A^{i,i-1}
\in \rset^{n_i \times n_{i-1}}$  and $B^{i,i-1} \in \rset^{n_i
\times m_{i-1}}$ are the matrices for the coupling dynamics which
define the influence of subsystem $i-1$ upon subsystem $i$. For
these systems, we consider mixed state and input constraints of the
following polyhedral form:
\begin{equation}\label{cstr}
G_x^i x_t^i + G_u^i u_t^i \leq b^i, 
\end{equation}
where  $ G_x^i  \in \rset^{q_i \times n_i}$, $ G_u^i  \in \rset^{q_i
\times m_i}$, the  matrix $\begin{bmatrix} G_x^i & G_u^i
\end{bmatrix} \in \rset^{q_i \times n_i+m_i}$ has full row rank and $b^i>0$. We
employ stage cost functions for states and inputs of the quadratic
form\footnote[1]{In this paper, we use the following notation:
$\left \| x \right \|^2_P=x^T P x$. }:
\begin{equation}
\ell^i(x_t^i,u_t^i)=\frac{1}{2} \left (\left \|x_t^i \right \|^2_{Q^i} + \left \| u_t^i \right \|^2_{R^i} \right ), \nonumber \\
\end{equation}
where $Q^i \in \rset^{n_i \times n_i}$ and $R^i \in \rset^{m_i \times m_i}$ are positive definite.
For the stability analysis, we also express the dynamics for the entire system as follows:
\begin{equation}
\bo{x}_{t+1}=\bo{A} \bo{x}_{t}+\bo{B} \bo{u}_t \label{prob_big},
\end{equation}
where $\bo{x}_t \in \rset^n$ and $\bo{u}_t \in \rset^m$ comprise the
states and inputs of all  the subsystems at time $t$ and the
matrices $\bo{A}$ and $\bo{B}$ are block banded matrices comprised
of $A^i$, $A^{i,i-1}$ and $B^i$, $B^{i,i-1}$ respectively. In a
similar fashion we define the  block diagonal matrices $\bo{Q_d}$ and
$\bo{R_d}$ comprised of $Q^i$ and $R^i$, respectively. In order to
ensure stability for the MPC scheme that we define below, we use a
terminal set-terminal cost approach \cite{c17,c16}. We define the
following final stage cost of the form:
\begin{equation}
\ell_{\text{f}}(\bo{x})=\left \| \bo{x}  \right \|^2 _{\bo{P_d}},
\nonumber
\end{equation}
where matrix $\bo{P_d} \in \rset^{n \times n}$  is positive definite.
In order to find $\bo{P_d}$ and also a terminal set $X_{\text{f}}$ we
search for a linear feedback  law $\bo{u}_t=\bo{K_d}\bo{x}_t$, such that
the system
\begin{equation}\label{system_stabil}
\bo{x}_{t+1}=(\bo{A+BK_d}) \bo{x}_t
\end{equation}
satisfies the following three properties \cite{c17}:
\begin{enumerate}
\item[A.1]{$\left \{ \left (\bo{x},\bo{K_dx}\right )| \bo{x} \in X_{\text{f}} \right \} \subseteq  \left \{ (\bo{x},\bo{u}) | G_x^ix^i+G_u^i u^i \leq b^i\; \right \} $}
\item[A.2]{$(\bo{A+BK_d}) \bo{x} \in X_{\text{f}}$, $ \forall \bo{x} \in X_{\text{f}}$}
\item[A.3]{$\ell_\text{f}$ satisfies the following property:}
\end{enumerate}
\begin{align}
\ell_\text{f}((\bo{A+BK_d})\bo{x})-&\ell_\text{f}(\bo{x})+\bo{x^TK_d^TR_dK_dx}+\bo{x^TQ_dx}\leq
0, \nonumber \\ &\forall \bo{x} \in X_{\text{f}}. \label{lyap1}
\end{align}
The centralized MPC scheme for the leader-follower system  described
by dynamics \eqref{prob_first}-\eqref{prob_din} based on a terminal
set-terminal cost approach, given an initial state $\bo{x}$ and
prediction horizon $N$, is formulated as follows:
\begin{align}
V_N(\bo{x})=&\min_{\bo{x},\bo{u}} \sum_{i=1}^M  \sum _{t=0}^{N-1} \ell^i(x_t^i,u_t^i) + \ell_\text{f}(\bo{x}_N)   \nonumber \\
\text{s.t:} \; &\text{dynamics \eqref{prob_first} and \eqref{prob_din}} \label{scheme_first} \\
& G_x^i x_t^i  + G_u^i u_t^i \leq b^i, x_0^i=x^i, \forall i=1,\dots,M, \nonumber \\
&   \bo{x}_N \in X_{\text{f}}. \nonumber
\end{align}
It is a well-known result \cite{c17} that the above MPC scheme,
under assumptions A.1-A.3,  stabilizes the system \eqref{prob_big},
with the optimal value of problem \eqref{scheme_first},
$V_N(\bo{x})$, as a Lyapunov function. Keeping in line with the
distributed nature of our system, the control law $\bo{K_d}$, the
final stage cost $\ell_\text{f}$ and the terminal constraint set
$X_{\text{f}}$ need to be computed locally. In the following
sections we develop a distributed synthesis  procedure under such
structural constraints.


\subsection{Terminal Cost} \label{sec1}
For a locally computed $\bo{K_d}$, we employ distributed control laws
$u^i=K^i x^i$ for each subsystem,  with $K^i \in \rset^{m_i \times
n_i}$ and the resulting control law for the entire system will then
be $\bo{u}= \bo{K_d} \bo{x}$, where the matrix
$\bo{K_d}=\text{diag}(K^i)$ is block-diagonal. For the terminal stage
cost, we define $\ell_\text{f}(\bo{x})=\displaystyle \sum_{i=1}^M
\ell_\text{f}^i (x^i)$, where terminal stage costs for each
subsystem are of the following quadratic form:
\begin{equation}
\ell_\text{f}^i(x^i)=\frac{1}{2} \left \| x^i \right \|^2_{P^i}, \; \forall i=2,\dots,M, \nonumber
\end{equation}
where the matrix $P^i \in \rset^{n_i \times n_i}$ is positive definite, such that $\bo{P_d}=\text{\text{diag}}(P^i)$.

Due to the block-diagonal structure of matrices $\bo{P_d}$, $\bo{Q_d}$
and $\bo{R_d}$, we  can rewrite \eqref{lyap1} equivalently as the
following inequality:
\begin{eqnarray}
&&V_f^1(x^1) +\sum_{i=2}^{M} V_f^i(x^i,x^{i-1}) \leq 0, \; \forall
\bo{x} \in X_{\text{f}} ,\nonumber
\end{eqnarray}
where the left hand side is a sum of local functions $V_f^i$ that have the following form:
\begin{align}
&V_f^1(x^1) \!=\! (x^1)^T \left ( (\tilde{A}^1)^T P^1 \tilde{A}^1 \!-\! P^1 \!+\! Q^1 \!+\!
 (K^1)^T R^1 K^1 \right ) x^1 \nonumber \\
&V_f^i(x^i,x^{i-1})=\begin{bmatrix}  (x^{i})^T & (x^{i-1})^T \end{bmatrix}
 \bo{P}^i  \begin{bmatrix} x^i\\ x^{i-1}  \end{bmatrix}, \; \forall i \geq 2, \nonumber
\end{align}
where $\tilde{A}^i=A^i+B^i K^i$,
$\tilde{A}^{i,i-1}=A^{i,i-1}+B^{i,i-1}K^{i-1}$  and matrices $\bo{P}^i$ are of
the following form:
\begin{align}
&\begin{bmatrix} (\tilde{A}^i)^T P^i \tilde{A}^i \!-\! P^i \!+\! Q^i \!+\!
(K^i)^T R^i K^i & \!\!\! (\tilde{A}^{i})^T P^i \tilde{A}^{i,i-1}  \\
(\tilde{A}^{i-1,i})^T P^i \tilde{A}_{i} & \!\!\!
(\tilde{A}^{i,i-1})^T P^i \tilde A^{i,i-1} \end{bmatrix}. \nonumber
\end{align}
We can ensure inequality \eqref{lyap1} imposing the following
distributed structure (see also \cite{c8} for a similar approach):
\begin{align}
V_f^1(x^1) &\leq q^1(x^1) \IEEEyessubnumber \label{first1} \\
V_f^i(x^i,x^{i-1}) & \leq q^i (x^i,x^{i-1}), \;   \forall
i=2,\dots,M, \;  \bo{x} \in X_{\text{f}}  \IEEEyessubnumber
\label{first2}
\end{align}
such that:
\begin{equation}
q(\bo{x}) = q^1(x^1) + \sum_{i=2}^M q_i(x^i,x^{i-1}) \leq 0, \;
\forall \bo{x} \in X_{\text{f}}. \label{sum_neg}
\end{equation}
We consider that the functions $q^i$ do not necessarily take
negative values  and have the following quadratic form:
\begin{align}
q^1(x^1)&= (x^1)^T W^1 x^1 \nonumber\\
q^i(x^i,x^{i-1})&= \begin{bmatrix} (x^i)^T & (x^{i-1})^T
\end{bmatrix}  W^i   \begin{bmatrix} x^i \\ x^{i-1} \end{bmatrix},
\nonumber
\end{align}
where the matrices $W^i=\begin{bmatrix} (W^i)_{11} & (W^i)_{12} \\
(W^i)_{12}^T & W^i_{22}   \end{bmatrix}$ are symmetric. Clearly,
$q(\bo{x})$ is also quadratic function and thus can be written as
$q(\bo{x}) = \bo{x}^T W \bo{x}$, for an appropriate matrix $W$
defined below. We now define the following optimization problem:
\begin{align}\label{sdp_1}
 &\underset{P^i,K^i,W^i,\tau}{\min} \tau  \\
\text{ s.t:} \; & \text{MI}^i(P^i,K^i,W^i) \preccurlyeq 0 \text{, } \forall i=1,\dots, M\nonumber \\
&W \preccurlyeq \tau I, \nonumber
\end{align}
where $\text{MI}^i(\cdot)$ refer to the matrix inequalities
\eqref{first1} and \eqref{first2} and  the matrix $W$ has the
following block tridiagonal structure:
\begin{equation}\nonumber
\begin{bmatrix}
 W^1+W^2_{22} & W^2_{12} & 0 & \dots & 0
\\ (W^2_{12})^T & W^2_{11}+W^3_{22} & \dots & 0 & \vdots
\\ 0 & \ddots &  \ddots & \ddots & 0
\\\vdots & 0 &\dots & W^{M-1}_{11}+W^M_{22} & W^M_{12}
 \\ 0& \dots & 0 &  (W^M_{12})^T & W^M_{11}\end{bmatrix}.
\end{equation}
It is straightforward to see that if \eqref{sdp_1} has an optimal
value $\tau^* \leq 0$,  ensuring that $W\leq0$ and subsequently
\eqref{sum_neg} holds, then \eqref{lyap1} is satisfied. Note that we
do not require that matrices $W_i$ to be negative semi-definite. On
the contrary, positive or indefinite matrices allow local terminal
costs to increase as long as the global cost still decrease.  This
approach reduces conservatism in deriving the matrices $P_i$ and
$K_i$. However, problem \eqref{sdp_1} is in a form that cannot be
solved efficiently since it is not a convex problem. Subsequently,
we show that \eqref{sdp_1} can be expressed as a sparse SDP that can
be solved distributively. To this goal, we employ the following
linearization \cite{c16}:
\begin{eqnarray}
 \label{lin1}
P^i = (S^i)^{-1}, \quad K^i = Y^i G^{-1}.
\end{eqnarray}
We also define the following matrices in order to make constraints
of the optimization problem in the following theorem more compact
notationally:
\begin{align}\nonumber
\tilde{G}^1&= G+G^T- S^1+\tilde{W}^1 \nonumber \\
\tilde{G}^i&= \begin{bmatrix} G+G^T -S^i+\tilde{W}^i_{11} & \tilde{W}^i_{12} \\
(\tilde{W}^i_{12})^T & G+G^T -\mu^i I +\tilde{W}^i_{22} \nonumber \end{bmatrix} \\
\tilde{B}^1 & = \begin{bmatrix} A^1 G + B^1 Y^1 \\
(Q^1)^{\frac{1}{2}} G  \\
(R^1)^{\frac{1}{2}}Y^1
\end{bmatrix}, \nonumber \\
\tilde{B}^i & = \begin{bmatrix} A^iG+B^iY^i &
{A}^{i,i-1}G+B^{i,i-1}Y^{i,i-1} \\ (Q^i)^{\frac{1}{2}} G  & 0   \\
(R^i)^{\frac{1}{2}}Y^i & 0
\\  0 & G^T    \end{bmatrix} \nonumber \\
\tilde{S}^1&=\begin{bmatrix} S^1 & 0 & 0   \\ 0 & I & 0  \\ 0 & 0 & I   \end{bmatrix}, \;
 \tilde{S}^i=\begin{bmatrix} S^i & 0 & 0 & 0 \\ 0 & I & 0 & 0 \\ 0 & 0 & I & 0 \\ 0& 0& 0 & \mu^i I  \end{bmatrix}. \nonumber
\end{align}

Note that the linearizations \eqref{lin1} have been employed under
the  assumption that all the subsystems have the same dimension for
the states, i.e. $n_i = n_j$ for all $i, j$.

\begin{theorem}
If the following SDP
\begin{eqnarray}\label{sdp_2}
 &\underset{G, S^i, Y^i, Y^{i,i-1}, \tilde{W}, \mu^i, \tau} \min \; \tau  \\
\text{ s.t: } \; & \begin{bmatrix} \tilde{G}^i & (\tilde{B}^i)^T \\ \tilde{B}^i & \tilde{S}^i
\end{bmatrix} \succcurlyeq 0, \; \forall
i=1,\dots,M \label{lmi2} \\
& Y^{i,i-1}=Y^{i-1},  \; \forall
i=2,\dots,M  \nonumber \\
& \tilde{W} \preccurlyeq \tau I, \nonumber
\end{eqnarray}
where $\tilde{W}$ has the same structure as $W$, has a negative
optimal value $\tau^* < 0$,  then \eqref{lyap1} holds.
\end{theorem}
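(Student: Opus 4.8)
The plan is to show that the SDP \eqref{sdp_2} is nothing but the problem \eqref{sdp_1} after the change of variables \eqref{lin1}, a congruence transformation, and a Schur complement, so that feasibility of \eqref{sdp_2} with $\tau^*<0$ yields the matrices $P^i = (S^i)^{-1}$, $K^i = Y^i G^{-1}$, $W^i = G^{-T}\tilde{W}^i G^{-1}$ (suitably block-interpreted) that make $W \preccurlyeq \tau^* G^{-T} I G^{-1} \prec 0$, hence \eqref{sum_neg}, hence \eqref{lyap1}. First I would recall that \eqref{lyap1} is implied by \eqref{first1}, \eqref{first2} together with $q(\bo{x})=\bo{x}^TW\bo{x}\le 0$, and that the latter is guaranteed by $\tau^*\le 0$ in \eqref{sdp_1}; so it suffices to produce a feasible point of \eqref{sdp_1} with the same $\tau$.

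Second, I would take a feasible tuple $(G,S^i,Y^i,Y^{i,i-1},\tilde{W},\mu^i,\tau)$ of \eqref{sdp_2}. The constraint $Y^{i,i-1}=Y^{i-1}$ ensures the coupling feedback is the previous subsystem's feedback, i.e. $\tilde{A}^{i,i-1}=A^{i,i-1}+B^{i,i-1}K^{i-1}$ is consistent with $K^{i-1}=Y^{i-1}G^{-1}$. Note $G$ must be invertible: the $(1,1)$ block of \eqref{lmi2} forces $S^i \preccurlyeq G+G^T$ (using positivity of the relevant principal submatrix together with $S^i\succ0$), and since $S^i\succ 0$ this gives $G+G^T\succ0$, so $G$ is nonsingular. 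Then I would apply the Schur complement to \eqref{lmi2}: since $\tilde{S}^i\succcurlyeq0$ and is in fact block-diagonal with $S^i$ (invertible) and identity/$\mu^iI$ blocks, \eqref{lmi2} is equivalent (after checking the rank/range condition, or simply assuming $\mu^i>0$ which the structure of $\tilde{G}^i$ forces) to $\tilde{G}^i - (\tilde{B}^i)^T (\tilde{S}^i)^{-1}\tilde{B}^i \succcurlyeq 0$. Expanding $(\tilde{S}^i)^{-1}=\mathrm{diag}((S^i)^{-1},I,I,(\mu^i)^{-1}I)$ and computing $(\tilde{B}^i)^T(\tilde{S}^i)^{-1}\tilde{B}^i$ block by block reproduces, in the transformed variables, exactly $\tilde{A}^TP\tilde{A}+Q+K^TRK$ terms; after moving the $-S^i$, $G+G^T$ and $\tilde{W}$ terms around one recognizes $G^T\cdot(\text{MI}^i)\cdot G$ up to the congruence, i.e. $G^T\,\text{MI}^i(P^i,K^i,W^i)\,G \preccurlyeq 0$, where $W^i$ is read off from $\tilde{W}^i$ via the same congruence.

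Third, since $G$ is invertible, $G^T M G\preccurlyeq0 \iff M\preccurlyeq0$, so $\text{MI}^i(P^i,K^i,W^i)\preccurlyeq0$ for all $i$, which is precisely \eqref{first1}, \eqref{first2}. It then remains to transfer $\tilde{W}\preccurlyeq\tau I$ to $W\preccurlyeq\tau' I$ for some $\tau'<0$: because $W$ assembles from the $W^i=G^{-T}\tilde{W}^iG^{-1}$ blocks with the same tridiagonal pattern as $\tilde{W}$ assembles from $\tilde{W}^i$, one has $W = G^{-T}\tilde{W}G^{-1}$ (block-diagonal $G$'s being compatible with the banded assembly since consecutive blocks share the same $G$), hence $W = G^{-T}\tilde{W}G^{-1} \preccurlyeq \tau\, G^{-T}G^{-1} \preccurlyeq 0$ when $\tau=\tau^*<0$, in fact $W\prec0$. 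Thus $q(\bo{x})=\bo{x}^TW\bo{x}<0$ and \eqref{lyap1} holds. The main obstacle I anticipate is the bookkeeping in the third paragraph: checking that the banded matrix $W$ really equals $G^{-T}\tilde{W}G^{-1}$ given that each $\tilde{W}^i$ is transformed by the \emph{same} $G$ on overlapping blocks (this is where the equal-dimension assumption $n_i=n_j$ and the single common variable $G$ are essential), and justifying the Schur-complement step when $\tilde S^i$ is only positive semidefinite — one must argue the range condition holds, or equivalently that $\mu^i$ can be taken strictly positive at optimality, otherwise the equivalence with \eqref{first1}--\eqref{first2} is only one-directional, which is nonetheless the direction we need.
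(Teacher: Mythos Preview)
Your overall strategy matches the paper's---change of variables \eqref{lin1}, Schur complement on \eqref{lmi2}, and congruence by $G$ to recover $\text{MI}^i\preccurlyeq 0$---but there is a genuine gap in the step where you claim to ``recognize $G^T\cdot\text{MI}^i\cdot G$'' after the Schur complement. The Schur complement of \eqref{lmi2} gives
\[
\tilde{G}^i \;\succcurlyeq\; (\tilde{B}^i)^T(\tilde{S}^i)^{-1}\tilde{B}^i,
\]
and the diagonal blocks of $\tilde{G}^i$ contain $G+G^T-S^i$ and $G+G^T-\mu^iI$, \emph{not} $G^T(S^i)^{-1}G$ and $(\mu^i)^{-1}G^TG$. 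Under the substitutions $P^i=(S^i)^{-1}$, $K^i=Y^iG^{-1}$, the congruence $\text{diag}(G^T,G^T)\,\text{MI}^i\,\text{diag}(G,G)$ has $-G^T(S^i)^{-1}G$ (from the $-P^i$ term) in the corresponding position. These are genuinely different matrices, so ``moving the $-S^i$, $G+G^T$ terms around'' does not close the argument.

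The paper bridges this with the one-sided relaxation
\[
(S^i-G)^T(S^i)^{-1}(S^i-G)\succcurlyeq 0
\;\Longrightarrow\;
G+G^T-S^i\preccurlyeq G^T(S^i)^{-1}G,
\]
and the analogous inequality with $\mu^iI$ in place of $S^i$. Combining these bounds on $\tilde G^i$ with the Schur complement is exactly what yields $\text{MI}^i\preccurlyeq 0$; without this step your implication fails. Your observation that only the one-directional implication is needed is correct, but the slack in that direction is provided by this relaxation, not by the Schur range condition you worry about. Once you insert this step, the rest of your outline (defining $W^i$ by block congruence with $G^{-1}$ and concluding $W\preccurlyeq\tau^*\,\text{diag}(G^{-T}G^{-1})\prec0$) goes through as in the paper.
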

\begin{proof}
From \eqref{lmi2} we observe that $S^i \succ 0$ and $\mu^i
>0$, which in turn implies:
\begin{align}
(S^i-G)^T &(S^i)^{-1} (S^i-G) \succcurlyeq 0 \nonumber \\
(\mu^i I-G)^T &\frac{1}{\mu^i}I (\mu^i I-G) \succcurlyeq 0.
\nonumber
\end{align}
By adding $\tilde{W}^i$ to the previous inequalities, we get:
\begin{align}\label{lmi3}
\tilde{G}^i \preccurlyeq \begin{bmatrix} G^T & 0 \\ 0 & G^T \end{bmatrix} \begin{bmatrix} (S^i)^{-1} & 0 \\
 0 & \frac{1}{\mu^i} I \end{bmatrix}
\begin{bmatrix} G & 0 \\ 0 & G \end{bmatrix} + \tilde{W}^i.
\end{align}
For $i=2, \cdots, M$, using \eqref{lmi3} and the equality
constraints $Y^{i,i-1}=Y^{i-1}$ and by applying the Schur complement
to \eqref{lmi2} we obtain:
\begin{align}\nonumber
\tilde{W}^i   \succcurlyeq & \begin{bmatrix} (\tilde{A}^i)^T P^i \tilde{A}^i -P^i  & (\tilde{A}^i)^T P^i \tilde{A}^{i,i-1} \nonumber \\
(\tilde{A}^{i,i-1})^T P^i \tilde{A}^i & (\tilde{A}^{i,i-1})^T P^i
\tilde{A}^{i,i-1}
 \end{bmatrix} \\& + \begin{bmatrix}  Q^i +(K^i)^T R^i K^i +G^{-T}G^{-1} & 0 \\ 0 & 0 \end{bmatrix} \nonumber ,
\end{align}
which is equivalent to \eqref{first2} if we take:
\begin{align}\nonumber
{W}^i=\begin{bmatrix} G^{-T} & 0 \\ 0 & G^{-T} \end{bmatrix}
\begin{bmatrix} \tilde{W}^i_{11} & \tilde{W}^i_{12} \\ (\tilde{W}^i_{12})^T &
 \tilde{W}^i_{22}  \end{bmatrix}
\begin{bmatrix} G^{-1} & 0 \\ 0 & G^{-1} \end{bmatrix}.
\end{align}
To transform inequality \eqref{first1} into a linear matrix
inequality of  type \eqref{lmi2}, we use the same linearizations and
the proof follows similar steps as those previously presented. As a
result, the SDP \eqref{sdp_2} is equivalent to problem
\eqref{sdp_1}, and for a negative optimal value $\tau^*$,
\eqref{lyap1} is satisfied.
\end{proof}
Note that the SDP problem \eqref{sdp_2}-\eqref{lmi2} can be solved
offline either using a  sparse SDP solver or some distributed
optimization algorithm \cite{NecJPC}. Since we have imposed a
diagonal structure on the controller $\bo{K_d}=\text{diag}(K^i)$, it
follows that the system matrix $\bo{A+BK_d}$ has a block bidiagonal
structure. If the optimal solution $\tau^*$ of the SDP is negative,
then the matrix $\bo{A+BK_d}$ is Schur (all the eigenvalues are strict
inside the unit circle). It follows that all the matrices $A^i + B^i
K^i$ are Schur.


\subsection{Terminal Set}\label{sec2}
To complete the stability analysis for system \eqref{system_stabil},
which implies  properties A.1 - A.3, we need to complete the design
procedure  by the computation of a terminal set
$X_f\subset\rset^{n}$, defined locally (as a Cartesian product)
$X_f=\underset{i=1}{\overset{M}{{\large \Pi}}} X_f^i$ and equipped
with invariance properties.

First let us define the set of \emph{admissible states} associated
to  the constraints \eqref{cstr} and the specific linear controller
$K^i$:
\[ X^i =\{x^i: (G_x^i + G_u^i K^i) x^i \leq b^i  \}.   \]
leading via the Cartesian product to a set in $\rset^{n}$:
\[ X =  \underset{i=1}{\overset{M}{{\large \Pi}}} X^i.   \]
\begin{assumption}\label{asum} The origin is assumed to be an interior point of the set $X$.
\end{assumption}

We introduce the following formal definition of \emph{positive invariance} in view of its use in the practical construction of the terminal set $X_f$.
\begin{definition}
A set $ \Omega \subseteq X$ is called positive invariant for system
\eqref{system_stabil}  if $\bo{x}_t \in \Omega$ it holds that
$\bo{x}_{t+1} \in \Omega$ for all $t \geq 0$.
\end{definition}
As a standard approach in the MPC design \cite{c17}, the terminal set $X_{\text{f}}\subset X$
needs to be positive  invariant for the nominal linear time-invaraint dynamics \eqref{system_stabil}. This is a standard problem in set-theoretic control theory and there are a number of ways in which can be computed (see e.g
\cite{Gil,RakKerrigan,c18}). 

Due to the distributed nature of our system, such a general terminal
constraint set  cannot be used due to the introduction of coupling
constraints between the states of the subsystems. We need to explore the
possibility of finding a  terminal constraint set, which preserves
the structure of a Cartesian product:
 \begin{equation}
 X_{\text{f}} =  \underset{i=1}{\overset{M}{{\large \Pi}}} X_{\text{f}}^i \label{relax},
\end{equation}
This will further enable a distributed use of the terminal constraint sets $X_{\text{f}}^i$ for each
of the subsystems. Is worth mentioning that for general systems the construction of a
terminal set in the form given above  can be cumbersome in
distributed settings (see e.g. \cite{RakDec} for such a
construction). However, for a system $\bo{x}_{t+1} = \tilde{\bo{A}}
\bo{x}_t$, where $\tilde{\bo{A}}$ has a special block bidiagonal structure and the admissible set is expressed as $X =
\underset{i=1}{\overset{M}{{\large \Pi}}} X^i$, the computation of
such an invariant set $X_{\text{f}}= \Pi_{i=1}^M
X_{\text{f}}^i$ can be simplified  by exploiting these structural properties.

 Without loss of generality the matrix  $\tilde{\bo{A}}$ will be  considered to
  be of the following form:
\begin{equation*}
\tilde{\bo{A}}=\begin{bmatrix} \bar A^{11} & 0 & \dots & \dots & 0 \\
\bar A^{21} & \bar A^{22} &  0 & \dots & 0 \\ 0 & \bar A^{32} & \bar A^{33} & \dots & 0 \\
\vdots &  \vdots &  \ddots & \ddots & 0\\ 0 & 0 & 0 & \bar A^{M,M-1}
& \bar A^{MM} \end{bmatrix}
\end{equation*}
i.e block lower-bidiagonal. The developments in Subsection
\ref{sec1} point to the construction of a distributed linear
controller which allow us to assume the stability of the
unconstrained local closed-loop system
$\bo{x}_{t+1}=\tilde{\bo{A}}\bo{x}_t$ around the origin. By the
block  lower-bidiagonal structure it follows that the matrix
$\tilde{\bo{A}}$ is Schur (i.e. $\left | \lambda(\tilde{\bo{A}})
\right |<1 $) and consequently through the block lower-bidiagonal
form of $\tilde{\bo{A}}$, all the matrices $\bar A^{ii}$ are also
Schur, for all $i=1, \cdots, M$.

The dynamics for the comprising subsystems are:
\begin{align}
x^1_{t+1}&= \bar A^{11} x^1_t \label{first}\\
x^i_{t+1}&=\bar A^{ii}x^i_t+ \bar A^{i,i-1} x^{i-1}_t, \; \forall
i=2,\dots,M. \label{second}
\end{align}

\subsubsection{Construction of $X_f^1$}

By taking into account that the
first subsystem is stable and its dynamics are not perturbed by the
other subsystems, the computation of $X_{\text{f}}^1\subset X_1$ as a
positive invariant set with respect to \eqref{first} can be done easily through standard
methods for LTI nominal dynamics available in \cite{RakKerrigan,c18}.

We note also that the boundedness of the set $X_1$ will ensure
boundedness  properties for the set $X_f^1$.

\begin{remark}\label{Lti_pi}
If $X_f^1\subseteq X_1$ is invariant  with respect to \eqref{first}
and $0\in \text{int}(X_f^1)$ then $\alpha X_{\text{f}}^1$ is
invariant and $0\in \text{int}(\alpha_1 X_f^1)$ for any scalar
$\alpha>0$. More than that, if $0<\alpha<1$, then $\alpha
X_f^1\subseteq X_1$.
\end{remark}

\subsubsection{Completing the construction of $X_f$}

For the subsystems $i=2, \dots, M$
 we require a different treatment. If we denote
$\bar A^{i,i-1} x^{i-1}_t=w^i_t$, the dynamics for the remaining
subsystems can be considered as:
\begin{equation}\label{rob2}
x^i_{t+1}= \bar A^{ii}x^i_t + w^i_t, \; \forall i=2, \cdots, M,
\end{equation}
where $w^i_t$ can now be viewed as an unknown disturbance for this
particular subsystem, where $w^i_t$ is bounded, i.e $w_t^i$ is in a
set $\mathcal{W}$\footnote{In the case of the second subsystem
$i=2$, we have $w^2_t= \bar A^{21} x^1_t$, and by taking into
account that $X_{\text{f}}^1$ is positive invariant, it can observed
that $w^2_t$ is bounded, i.e $w^2_t \in \mathcal{W}^2$, where
$\mathcal{W}^2= \bar A^{21} X_{\text{f}}^1$.}. We denote by
$w(\cdot) \in \mathcal{M}_{\mathcal{W}}$ the sequence ${w_0, w_1,
\dots, w_k}$ of disturbances from the admissible set
$\mathcal{M}_{\mathcal{W}}=\left \{ w(\cdot) | w_k \in \mathcal{W},
\; \forall k \in \mathbb{N} \right \}$.

\begin{definition}\label{robust1}
The set ${\cal O} \subseteq X$ is a robust positive invariant set for
a system $x_{t+1}=Ax_t+w_t$, if starting from ${\cal O}$, the evolution of
the system remains in ${\cal O}$ for all $w(\cdot) \in
\mathcal{M}_{\mathcal{W}}$.
\end{definition}
We observe that $X_{\text{f}}^i$ can now be computed as a robust
positive invariant set (RPI) for the subsystem with the index $i\ge
2$, by exploiting the contractiveness properties of $\bar A^{ii}$
and the existence of explicit bounds on $w^i_t$. The practical
construction of such RPI sets is standard in the literature, see for
example the procedures in \cite{RakKerrigan}, \cite{Kof},
\cite{IJC}. In the following such a constructive procedure will be
denoted by $X_{\text{f}}^i = RPI(X^{i},\mathcal{W}^i)$.

\begin{proposition}
Let $X_{\text{f}}^i = RPI(X^i,\mathcal{W}^i)$ be an invariant set with respect to \eqref{second}, having the origin as interior point. There always exists a scalar $0<\alpha<1$ such that $\alpha X_{\text{f}}^i = RPI(\alpha X^{i},\alpha\mathcal{W}^i)$ preserve the invariance properties and additionally  $\alpha X_{\text{f}}^i \subseteq X_i$.
\end{proposition}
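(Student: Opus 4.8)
The plan is to reduce the statement to the positive homogeneity of the $RPI$ construction under a joint scaling of the state‑constraint set and the disturbance set, plus the elementary fact that a polyhedron $\{x^i:(G_x^i+G_u^iK^i)x^i\le b^i\}$ with $b^i>0$ contracts into (indeed, into the interior of) itself. I would first verify the invariance for an arbitrary $\alpha>0$: if $x\in\alpha X_{\text{f}}^i$ and $w\in\alpha\mathcal{W}^i$, write $x=\alpha\xi$ and $w=\alpha\omega$ with $\xi\in X_{\text{f}}^i$, $\omega\in\mathcal{W}^i$; by linearity $\bar A^{ii}x+w=\alpha(\bar A^{ii}\xi+\omega)$, and since $X_{\text{f}}^i$ is robust positive invariant for \eqref{rob2} with disturbances in $\mathcal{W}^i$ we have $\bar A^{ii}\xi+\omega\in X_{\text{f}}^i$, hence $\bar A^{ii}x+w\in\alpha X_{\text{f}}^i$. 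Combined with $\alpha X_{\text{f}}^i\subseteq\alpha X^i$ (immediate from $X_{\text{f}}^i\subseteq X^i$) and the fact that scaling by $\alpha>0$ keeps the origin in the interior, this shows $\alpha X_{\text{f}}^i$ is a robust positive invariant subset of $\alpha X^i$ for the disturbance set $\alpha\mathcal{W}^i$ with the origin as interior point — i.e. the invariance properties are preserved. To promote this to the \emph{equality} $\alpha X_{\text{f}}^i=RPI(\alpha X^i,\alpha\mathcal{W}^i)$ I would use the extremal property that characterizes the constructive operator $RPI(\cdot,\cdot)$ together with the observation that $\Psi\mapsto\frac{1}{\alpha}\Psi$ is an inclusion‑preserving bijection between the robust invariant subsets of $\alpha X^i$ (for $\alpha\mathcal{W}^i$) and those of $X^i$ (for $\mathcal{W}^i$); hence the extremal (maximal, resp. minimal) such set on one side is exactly $\alpha$ times the extremal such set on the other, which is $\alpha X_{\text{f}}^i$. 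For the minimal RPI set this is just $\bigoplus_{k\ge0}(\bar A^{ii})^k(\alpha\mathcal{W}^i)=\alpha\bigoplus_{k\ge0}(\bar A^{ii})^k\mathcal{W}^i$.

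For the extra inclusion $\alpha X_{\text{f}}^i\subseteq X^i$ I would use $X^i=\{x^i:(G_x^i+G_u^iK^i)x^i\le b^i\}$, $b^i>0$, and $X_{\text{f}}^i\subseteq X^i$ (which holds by the very definition of a robust invariant set): for any $\alpha\in(0,1)$ and $x\in X_{\text{f}}^i$, $(G_x^i+G_u^iK^i)(\alpha x)=\alpha(G_x^i+G_u^iK^i)x\le\alpha b^i<b^i$, so $\alpha x\in X^i$; thus $\alpha X_{\text{f}}^i\subseteq X^i$, in fact $\alpha X_{\text{f}}^i\subseteq\text{int}(X^i)$. Any $\alpha\in(0,1)$ therefore works — one can take $\alpha$ as close to $1$ as the subsequent recursion for $X_{\text{f}}^{i+1}$ (via $\mathcal{W}^{i+1}=\bar A^{i+1,i}(\alpha X_{\text{f}}^i)$) requires.

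The only genuinely delicate point is the equality "$=RPI(\cdot,\cdot)$": it forces one to commit to exactly which set the procedure $RPI(X^i,\mathcal{W}^i)$ returns — the maximal RPI set in $X^i$, the minimal RPI set, or one of the finitely determined inner/outer approximations of \cite{RakKerrigan},\cite{Kof},\cite{IJC} — and then to check that this defining property is stable under the rescaling $\Psi\mapsto\alpha\Psi$. This holds in every one of these cases, because each step of the underlying set recursions (a Minkowski sum, a preimage under $\bar A^{ii}$, an intersection with $X^i$) commutes with a simultaneous linear rescaling of all its data; once that is pinned down, the remainder of the argument is the one‑line linearity identity $\bar A^{ii}(\alpha x)+\alpha w=\alpha(\bar A^{ii}x+w)$.
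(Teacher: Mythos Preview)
Your argument is correct and is essentially the same as the paper's, which simply invokes Remark~\ref{Lti_pi} together with the scaling properties of RPI sets from \cite{Kol}; you have just unpacked those scaling properties explicitly via the linearity identity $\bar A^{ii}(\alpha x)+\alpha w=\alpha(\bar A^{ii}x+w)$ and the fact that $b^i>0$ makes $X^i$ star-shaped about the origin. Your careful discussion of which extremal set the operator $RPI(\cdot,\cdot)$ returns is more thorough than the paper's one-line appeal to \cite{Kol}, but the underlying idea is identical.
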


\begin{proof} The proof is an immediate application of the Remark \ref{Lti_pi} and the scaling properties of the RPI sets detailed in \cite{Kol}.
\end{proof}

With these (robust) positive invariance and constraint  satisfaction
properties we are able to propose a constructive procedure for
$X_{\text{f}}^i$ in a iterative manner, starting from the first
subsystem and leading to an invariant set in $\rset^n$, as presented
in the following algorithm:
\begin{center}
\begin{enumerate}
\item{compute $X_{\text{f}}^1$}
\item{ \begin{tabbing}
 for \= $i=2:M$   \=  \=  \\
\> 1. compute $\mathcal{W}^{i}= \bar A^{i,i-1}X_{\text{f}}^{i-1}$ ;\\
\> 2. compute $X_{\text{f}}^i= RPI(X^i,\mathcal{W}^i)$ ;\> \>
 \end{tabbing}
 }
\item{find $0<\alpha <1$ such that $\alpha X_{\text{f}}^i\subset X_i, \forall i=1,\dots, M$}
\end{enumerate}
\end{center}

Since for the leader-follower systems described in this paper the
matrix $\bo{A+BK_d}$ is block lower-bidiagonal as well, we can use the
procedure described above to   compute a terminal set of the form $
X_{\text{f}} = \underset{i=1}{\overset{M}{{\large \Pi}}}
X_{\text{f}}^i$ that satisfies the properties A.1--A.3.  Note that
the distributed MPC controller presented below results in a larger
region of attraction  compared to other MPC schemes based on an end
point  constraint \cite{c1}. An additional novelty of our approach
consists in the fact that all the computations for the terminal set
and cost can be carried out in a completely distributed way.
Note that this strategy for constructing sets $X_{\text{f}}^i$ can also be extended to the case
where $\tilde{\bo{A}}$ is block lower triangular, i.e subsystem $i\geq 2$ is affected by
subsystems $1, \dots, i-1$. In this case, the sets $\mathcal{W}^i$ would be constructed
as $\mathcal{W}^i=\mathcal{W}^{i,1}\oplus \dots \oplus \mathcal{W}^{i,i-1}$,
where by $\oplus$ we denote the Minkowski sum: $A \oplus B=\{x+y |
x\in A, \; y \in B\}$ and $\mathcal{W}^{i,j}= \bar A^{ij}X_{\text{f}}^j$, $j=1,\dots,i-1$. \\
We can now reformulate the centralized  MPC problem  for the entire
system \eqref{scheme_first}  as following:
\begin{align} \label{prob}
 V_N(\bo{x})  = &\min_{ \bo{x} , \bo{u}} \sum_{i=1}^M  \sum _{t=0}^{N-1} \ell^i(x_t^i,u_t^i)
  + \ell_\text{f}^i(x_N^i)   \\
\text{s.t:} \; &\text{dynamics \eqref{prob_first} and \eqref{prob_din}}, \nonumber \\
& G_x^i x_t^i  + G_u^i u_t^i \leq b^i, \; G^i x_N^i \leq
 f^i, \; \forall i=1,\dots,M \nonumber
\end{align}
where we assume that the terminal  sets $X_{\text{f}}^i$ constructed
previously   are polyhedra described by $G^i x_N^i \leq f^i$, with
$f^i >0$.


\section{Problem restructuring}
\label{restructurare}

We now propose to reformulate problem \eqref{prob} as to obtain a
more suitable structure. We define the intermediary stage variables
for subsystem $i$ as:
\begin{equation}
\mathbf{x}_t^i=\left [(x_t^i)^T \;\ (u_t^i)^T \right ] ^T \in \rset
^{\bo{n}_i}, \nonumber
\end{equation}
where $\bo{n}_i=n_i+m_i$ and $t=1,\dots, N-1$. Next, we define the
general decision variable $\bo{z} \in \rset^{\bo{n}}$ for
\eqref{prob} as follows:
\begin{equation}
\bo{z}=[(\bo{z}^1)^T \dots (\bo{z}^M)^T]^T, \nonumber
\end{equation}
where $\bo{n}=\displaystyle \sum_{i=1}^M N \bo{n}_i$ and
\begin{equation}
\bo{z}^i=[ {(u_0^i)^T \;  (x_1^i)^T  \dots (u_{N-1}^i)^T \;
(x_{N-1}^i)^T \;  (x_N^i)^T} ]^T. \nonumber
\end{equation}

Now, in accordance with the general decision variable as defined
above and in order to  create a more compact and ordered final
structure, we need to define the following matrices:
\begin{eqnarray}
&&\bo{E}^i=\begin{bmatrix} I_{n_i} & 0 \end{bmatrix} \in \rset ^{n_i \times \bo{n}_i}, \;
\bo{A}^i = \begin{bmatrix} A^i & B^i \end{bmatrix} \in \rset ^{n_i \times \bo{n}_i} \nonumber\\
&& \bo{A}^{i,i-1} = \begin{bmatrix} A^{i,i-1} & B^{i,i-1} \end{bmatrix} \in \rset ^{n_i \times \bo{n}_{i-1}} \nonumber \\
&& \bo{Q}^i= \text{diag}(Q^i, R^i) \in \rset ^{\bo{n}_i \times \bo{n}_i} \nonumber\\
&& \tilde{\bo{Q}}^i=\text{diag}({R^i,\bo{Q}^i,\dots,\bo{Q^i},P^i}), \nonumber
\end{eqnarray}
where $\bo{\tilde{Q}}^i \in \rset^{N\bo{n}_i  \times N \bo{n}_i }$ has $N-1$ $\bo{Q}^i$
blocks in its diagonal.
Using the intermediary stage variable  we can rewrite the equality constraints in \eqref{prob}
for subsystem $i$ as:
\begin{equation}\label{eg2_mod}
\bo{E}^i \bo{x}_{t+1}^i= \bo{A}^i \bo{x}_t^i + \bo{A}^{i,i-1} \bo{x}_t^{i-1}.
\end{equation}
We now recast \eqref{prob} as:
\begin{align}{}
&\min_{\bo{z}} \;  \bo{z}^T H \bo{z} \label{Prob} \\
&\text{s.t}:\;  G\bo{z} \leq  b,\; C\bo{z}=c, \nonumber
\end{align}
where $H \in \rset^{\bo{n} \times \bo{n}}$ is
$\text{diag}(\bo{\tilde{Q}}^i)$, with $i=1,\dots,M$. We have
included the equality constraints for each subsystem in \eqref{prob}
in $C\bo{z}=c$,  where $c \in \rset^{\sum_{i=1}^M N n_i}$ and $C \in
\rset ^{\sum_{i=1}^M Nn_i \times \bo{n}}$  are the following:
\begin{equation}
c= \begin{bmatrix} A^1 x_0^1 \\ {\large 0} _{(N-1)n_1,1} \\
A^{21}
x_0^1 + A^2 x_0^2 \\  {\large 0} _{(N-1)n_2,1} \\ \vdots \\
A^{M,M-1} x_{0}^{M-1} + A^M x_0^M \\ {\large 0} _{(N-1)n_M,1}
\end{bmatrix} \nonumber
\end{equation}
\begin{equation} \label{cdef}
C=\begin{bmatrix}C^{11} & 0&\dots&\dots&0 \\ C^{21} & C^{22} &0&
\dots&0 \\ 0 & C^{32} & C^{33} &  \dots &0\\ 0&0&\ddots& \ddots&0 \\
0 &\dots&0& C^{M,M-1} & C^{MM}
   \end{bmatrix}.
\end{equation}
In \eqref{cdef} the matrices $C^{ii} \in \rset^{N n_i \times N
\bo{n_i}}$, for $i=1,\dots,M$,  have the following structure:
\begin{equation*}
C^{ii}=\begin{bmatrix} -B^i & \bo{E}^i &\dots& \dots& \dots & 0 \\
0& -\bo{A}^i  & \bo{E}^i &   \dots& \dots &  0\\ 0 & 0 &  -\bo{A}^i
& \bo{E}^i & \dots & 0 \\  \vdots & \vdots &  \vdots & \ddots &
\ddots & 0 \\ 0 & 0 & 0 & 0 & -\bo{A}^i & I_{n_i}
\end{bmatrix},
\end{equation*}
whilst the matrices $ C^{i,i-1} \in \rset^{Nn_i \times
N\bo{n}_{i-1}}$, for $i=2,\dots,M$, have  the following structure:
\begin{equation*}
\begin{bmatrix} -B^{i,i-1} & 0&\dots& \dots & \dots & 0 \\ 0 & -\bo{A}^{i,i-1}  & 0 & \dots &
\dots & 0 \\ \vdots & \vdots &  -\bo{A}^{i,i-1} & 0 &  \dots & 0\\ \vdots & \vdots & \vdots & \ddots &
\dots & 0  \\ 0 & 0 & 0 & 0 &  -\bo{A}^{i,i-1} & 0
\end{bmatrix}.
\end{equation*}
The inequality constraints in \eqref{prob} have been recast as
$G \bo{z}\leq b$, where $b \in \rset^{\bo{q}}$,  with $\bo{q}=
\displaystyle \sum_{i=1}^M (N q_i +q)$ and $ G \in \rset^{ \bo{q}
\times \bo{n}}$ have the following structure:
\begin{equation}
b=[ (\bo{b}^1)^T, \dots, (\bo{b}^M)^T]^T, \nonumber
\end{equation}
where
\begin{equation}
\bo{b}^i=\big[ (b^i-G_x^i x_0^i)^T ,\overbrace{(b^i)^T \dots
(b^i)^T}^{N-1 \text{ times}},(f^i)^T \big ] ^T  , \nonumber
\end{equation}
and
\begin{equation}
G=\begin{bmatrix} \bo{G}^1 & 0 & \dots & 0 \\ 0 &\bo{G}^2 & \dots & 0 \\
\vdots & \vdots &\ddots  & 0\\ 0& 0 & 0 & \bo{G}^M \end{bmatrix},
\end{equation}
whose matrix blocks $\bo{G}^i \in \rset^{N q_i+q  \times N
\bo{n}_i}$ are:
\begin{equation*}
\bo{G}^i=\begin{bmatrix} G_u^i & 0 & \dots & \dots & \dots & \dots & 0\\
0 & G_x^i & G_u^i & 0&
 \dots & \dots &0 \\ \vdots&0 &0 & \ddots &0& \dots & 0  \\ \vdots &\vdots&\vdots& \vdots & G_x^i & G_u^i & 0\\
  0&0&0&0 &0 & 0 & G^i  \end{bmatrix}
\end{equation*}


\section{Primal-dual interior point method}
\label{ipm}

Primal-dual interior point methods are very efficient optimization
methods which employ the Karush-Kuhn-Tucker (KKT)  conditions, that
are both necessary and sufficient for achieving optimality for a
convex optimization problem. We intend to use a primal-dual interior
point algorithm for problem \eqref{Prob}, which uses Mehrothra's
predictor-corrector scheme \cite{c26}. The KKT optimality conditions
which result from \eqref{Prob} are:
\begin{eqnarray}
H\bo{z} + C^T \nu + G^T \lambda= 0 \nonumber \\
C\bo{z}-c=0 \nonumber \\
G\bo{z}-b+s=0 \nonumber \\
\Lambda S =0 \nonumber \\
\lambda \geq 0, s \geq 0, \nonumber
\end{eqnarray}
where $s \in \rset^{\bo{q}}$ are slack variables, $\nu \in \rset
^{n_M}$ and $\lambda \in \rset^{\bo{q}}$ are  the Lagrange
multipliers and $S=\text{diag}(s)$, $\Lambda=\text{diag}(\lambda)$
are diagonal matrices formed from the slack variables and respective
multipliers. These conditions lead to the following Newton system
(see \cite{c6} for more details):
\begin{equation}
\begin{bmatrix} H & C^T & G^T & 0\\ C &0&0&0 \\ G &0&0& I \\ 0&0&S&\Lambda \end{bmatrix}
\begin{bmatrix}
\Delta z \\ \Delta \nu \\ \Delta \lambda \\ \Delta s
\end{bmatrix}
= -
\begin{bmatrix}
r_z \\ r_{\nu} \\ r_{\lambda} \\ r_s
\end{bmatrix}.
\end{equation}
We can eliminate $\Delta s$ by using $\Delta s= -\Lambda ^{-1} (r_s+
S\Delta \lambda) $. Furthermore, by reducing $\Delta \lambda =
S^{-1} \Lambda \left ( r_{\lambda} + G\Delta z \right )-S^{-1} r_s$,
we obtain the following system:
\begin{equation}\label{augsys2}
\begin{bmatrix} \Phi & C^T \\ C & 0  \end{bmatrix}
 \begin{bmatrix} \Delta z \\ \Delta \nu \end{bmatrix}
= -\begin{bmatrix} r_d \\ r_{\nu} \end{bmatrix},
\end{equation}
where
\begin{eqnarray} \label{augsys}
\Phi & = & H + G^T S^{-1} \Lambda G \nonumber \\
r_d &=& r_z +G^T S^{-1} \Lambda r_{\lambda}-G^TS^{-1}r_s.
\end{eqnarray}
Next, we form the Schur complement of the matrix in \eqref{augsys2}
so as to obtain the final  system of equations:
\begin{eqnarray}\label{ecprinc}
Y \Delta \nu &=& \tau  \IEEEyessubnumber \label{ecprinc1}\\
Y &=& C \Phi^{-1} C^T  \IEEEyessubnumber \label{ecprinc2}\\
\tau&=& -r_{\nu} -C \Phi^{-1} r_d   \IEEEyessubnumber \label{ecprinc3}\\
\Delta z &=& \Phi^{-1} ( -r_d - C^T \Delta \nu).  \IEEEyessubnumber \label{ecprinc4}
\end{eqnarray}

 Solving \eqref{ecprinc1} would normally employ the
computation of $Y$, which may appear to be overwhelming given the
large dimensions of $Y$ and the fact that it requires an inversion
of $\Phi$. However, due to the way in which matrix $Y$ is formed in
\eqref{ecprinc2}, we show that we can compute its Cholesky
factorization in an efficient and distributed manner, similar to the
one found in \cite{c4} for one linear system. The matrix $\Phi \in
\rset^{\bo{n} \times \bo{n}}$ has a block-diagonal structure $\Phi=\text{diag}\left(\Phi^i\right)$,
where the blocks $\Phi^i \in \rset^{N \bo{n}_i \times N \bo{n_i}}$
are also block  diagonal, with their first block of size $m_i \times
m_i$, the following $N-1$ blocks of size $\bo{n}_i \times \bo{n}_i$
and the final block of size $n_i \times n_i$. Now, it can be
observed that resulting matrix $Y$ has the following
block-tridiagonal structure:
\begin{equation*}
\begin{bmatrix} Y^{11} & (Y^{21})^T & 0 & \dots & 0 & 0 \\ Y^{21}  & Y^{22} & (Y^{32})^T &
\dots & 0 & 0 \\ 0 & Y^{32} & Y^{33} & \dots  & 0 & 0\\ \vdots &
\vdots & \vdots & \ddots & \vdots & \vdots \\  0 & 0 & 0 & \dots &
Y_{M-1M-1}& Y_{MM-1}^T \\  0&0&0& \dots & Y_{MM-1} & Y_{MM}
\end{bmatrix}
\end{equation*}
where the matrix blocks are:
\begin{eqnarray}
Y^{11}&=& C^{11} (\Phi^1)^{-1} (C^{11})^T \nonumber \\
Y^{ii}&=& C^{i,i-1} (\Phi^{i-1}) ^{-1} (C^{i,i-1})^T \\ &+& C^{ii} (\Phi^i)^{-1} (C^{ii})^T \text{, }
 \forall i=2 \dots M  \nonumber\\
Y^{i,i-1}&=& C^{i,i-1} (\Phi^{i-1}) ^{-1} (C^{i-1,i-1})^T \text{, } \forall i=2\dots M. \nonumber
\end{eqnarray}

First, we show how to compute efficiently in a distributed fashion
the matrix $Y$. Note that inverting the block components of $\Phi$
and then forming the block components of $Y$ would be very
inefficient. However, if we form the Cholesky factorization of
$\Phi^{i}=\bo{L}^i (\bo{L}^i)^T$ we get:
\begin{eqnarray}
V^{ii}&=& C^{ii} (\bo{L}^i)^{-T} \label{lvar1}\\
W^{i,i-1}&=& C^{i,i-1} (\bo{L}^{i-1})^{-T},\label{lvar2}
\end{eqnarray}
where $\bo{L}^i \in \rset^{N\bo{n}_i \times N\bo{n}_i} $ are also
block diagonal, so that the block components of $Y$ are:
\begin{eqnarray*}
 Y^{11}&=&V^{11}(V^{11})^T\\
Y^{i,i}&=&W^{i,i-1}(W^{i.i-1})^T+V^{ii}(V^{ii})^T, \; \forall i \geq 2\\
Y^{i,i-1}&=&W^{i,i-1} (V^{i-i.i-1})^T, \; \forall i \geq 2
\end{eqnarray*}

The most efficient computation of $V^{i,i-1}$ can be done by solving
the following systems of matrix equations, where the matrices
$L_j^i$, with $j=0,\dots, N$, are the diagonal elements of
$\bo{L}^i$:
\begin{align}
L_0^i (V_{11}^i)^T&=(B^i)^T \label{lec1} \IEEEyessubnumber\\
L_j^i (V_{jj}^i)^T&=(\bo{A}^i)^T \text{, } \forall j=1 \dots N-1 \label{lec2} \IEEEyessubnumber\\
L_j^i (V_{j,j+1}^i)^T&=(\bo{E}^i)^T, \forall j=1 \dots N-1 \label{lec3} \IEEEyessubnumber\\
L_N^i (V_{N,N+1}^i)^T&=I_{n_i} \label{lec4}. \IEEEyessubnumber
\end{align}

Equations \eqref{lec1}--\eqref{lec4}  can be efficiently solved by
matrix forward substitution, due to the lower triangular form of
$L_j^i$.  The resulting matrix will take the following form:
\begin{equation*}
V^{ii}= \begin{bmatrix} V_{11}^i & V_{12}^i & 0 & \dots & \dots & 0 \\
0 & V_{22}^i  & V_{23}^i & 0 & \dots &0\\ \vdots & 0 & V_{33}^i & V_{34}^i &\dots & 0\\
\vdots & \vdots &\vdots & \ddots & \ddots & 0\\ 0&0&0&0& V_{N,N}^i & V_{N,N+1}^i \end{bmatrix}.
\end{equation*}
To obtain $W^{i,i-1}$ we solve the following series of matrix
equations,  by matrix forward substitution, considering the dense
nature of $B^{i,i-1}$ and $\bo{A}^{i,i-1}$:
\begin{align}
L_0^{i-1} (W_{11}^i)^T&=(B^{i,i-1})^T \label{lec5}\\
 L_j^{i-1}  (W_{j+1,j+1}^i)^T&=(\bo{A}^{i,i-1})^T \text{, } \forall j=1 \dots N-1, \label{lec6}
\end{align}
where $L_j^{i-1}$ are the diagonal elements of $\bo{L}^{i-1}$. The
resulting  $W^{i,i-1}$ matrices will have a block-diagonal structure.


Second, the resulting structure of the Cholesky factorization of
$Y=LL^T$ is as follows:
\begin{equation}
L=\begin{bmatrix} L^{11} & 0 & 0  & \dots & 0 & 0 \\ L^{21} & L^{22} & 0  &
 \dots & 0 & 0 \\ 0 & L^{32} & L^{33} & \dots & 0 & 0 \\
 \vdots & \vdots & \vdots &
\ddots & \vdots & \vdots  \\ 0 & 0 & 0 &  \dots & L^{M-1,M-1} & 0 \\
0 & 0 & 0 & \dots & L^{M,M-1} & L^{MM} \end{bmatrix} \nonumber
\end{equation}
where $L^{ii} \in \rset ^{N n_i \times N n_i} \text{ , } \forall
i=1,\dots,M$ and $L^{i,i-1} \in \rset ^{N n_i \times Nn_{i-1}}
\text{ , } \forall i=2\dots M$. The block components $L^{ii}$ and
$L^{i,i-1}$ can be obtained from the following:
\begin{align}
& L^{11}(L^{11})^T =Y^{11} \label{choly1} \\
& L^{i,i-1} (L^{i-1,i-1})^T = Y^{i,i-1}  \label{choly2} \\
& L^{ii}(L^{ii})^T = Y^{ii}-L^{i,i-1}(L^{i,i-1})^T, \; \forall i
\geq 2. \label{choly3}
\end{align}

Note that the matrices $Y^{ii}$  have a block tridiagonal structure,
but $L^{i,i-1}$ are usually dense so that there is no special
structure in the terms $Y^{ii}-L^{i,i-1}(L^{i,i-1})^T$. Therefore,
the Cholesky factorization of these matrices is computationally
demanding.


\section{Discussion on implementation}
\label{discutie}

The most important aspect of the algorithm previously presented is
that it can be implemented in a
 distributed manner, between the $M$ subsystems.
The Cholesky factorization of $Y$ clearly dominates the system of
equations \eqref{ecprinc} when it  comes to computing cost. By computing the matrices
$V^{i,i}$ and $W^{i,i-1}$, the inversion of $\Phi$ can be avoided, and they can further be used
in \eqref{ecprinc3} and \eqref{ecprinc4} to calculate the respective residuals.
The factorization of $Y$ is also the most complex
when it comes to the communication between subsystems, requiring the
back and forth transmission of matrices between subsystems. Also, due to the structure
of $Y$, the factorization cannot be done in parallel and is achieved
in a sequential manner. For subsystem $i$, with
$i=2, \cdots, M-1$ the following steps are required for obtaining
$L^{ii}$ and $L^{i,i-1}$:
\begin{enumerate}
\item{Compute $\Phi_i=\bo{L}_i\bo{L}_i^T$}
\item{Send $\bo{L}^i$ to subsystem $i+1$}
\item{Receive $\bo{L}^{i-1}$ from subsystem $i-1$}
\item{Compute $V^{ii}$: solve \eqref{lec1} to \eqref{lec4}}
\item{Send $V^{ii}$ to subsystem $i+1$ }
\item{Compute $W^{i,i-1}$: solve \eqref{lec5} and \eqref{lec6}}
\item{Compute $Y^{ii}$, receive $V^{i-1,i-1}$ from subsystem $i-1$}
\item{Compute $Y^{i,i-1}$, receive $L^{i-1,i-1}$ from subsystem $i-1$}
\item{Compute $L^{i,i-1}$ from \eqref{choly2} }
\item{Compute $L^{ii}$ from \eqref{choly3}, send $L^{i,i}$ to subsystem $i$ }
\end{enumerate}
The number of flops for computing the Cholesky factorization of $Y$
by each subsystem  are provided in Table \ref{tabel1}: \\
\begin{table}[h]
\setlength{\belowcaptionskip}{6pt}
\renewcommand{\arraystretch}{0.8}
\caption{\small{Number of flops for computing local components of
the  Cholesky factorization of $Y$} } \centering
\begin{tabular}{c||c}
\hline
\bfseries Operation & Number of flops (approximate)\\
\hline\hline Factor: $\Phi^i =\bo{L}^i (\bo{L}^i)^T$  & $(N-1)
\frac{\bo{n}_i^3}{3}+\frac{n_i^3+m_i^3}{3}$ \\ [3pt] Solve:
\eqref{lec1} & $n_i m_i^2$\\ [3pt] Solve: \eqref{lec2}, \eqref{lec3}
&  $ 2(N-1) n_i
\bo{n}_i^2$\\
[3pt] Solve \eqref{lec4} & $\frac{n_i^3}{3}$\\
[3pt] Solve \eqref{lec5} & $n_i m_{i-1}^2 $ \\ [3pt] Solve
\eqref{lec6} & $(N-1)n_i \bo{n}_{i-1}^2$\\ [3pt] Compute: $Y^{ii}$ &
$N n_i^{2}(n_{i}+\bo{n}_i+\bo{n}_{i-1}+2)$ \\ [3pt] Compute:
$Y^{i,i-1}$ & $N (n_i n_{i-1} \bo{n}_{i-1})$ \\ [3pt]
Compute: $L^{ii}$ & $\frac{N^3n_i^3}{3}$ \\[3pt]
Compute: $L^{i,i-1}$ & $N^3n_{i-1}^{2} n_{i}$ \\
\hline \hline
\end{tabular}
\label{tabel1}
\end{table} \\
It can be observed that the matrices transmitted back and forth are
very sparse, with a known block structure such that the only data
required to be transmitted are these comprising blocks. Also, these
blocks are transmitted only to neighboring subsystems, such that the
transmission of data is localized.

Note that the cost of computing matrices $L^{ii}$ and $L^{i,i-1}$ is
cubic in $N$ but linear in $M$ overall, given the choice of
$\bo{z}$. Also,  computations can be done sequentially and exchange
of information is only between neighbors. If we would rearrange
$\bo{z}$ by the prediction horizon, instead of by subsystems, then
the dominating cost for computing these matrices would be linear in
$N$ overall and cubic in $M$ locally, i.e of order $  \frac{ \left (
\sum_{i=1}^M n_i \right )^3}{3} $ for $L^{ii}$. However, this would
imply that every subsystem has knowledge of the dynamics of all
other subsystems, and as a result computations would require
all-to-all transmission of data between subsystems. Thus, the
efficient choice of $\bo{z}$ given a physical leader-follower system
involves the imposed prediction horizon $N$, the number of the
subsystems $M$ and possible transmission limitations between
subsystems.


\section{Conclusions}
In this paper we have showed that by restructuring certain MPC
problems for large-scale systems we can reduce the computational
cost of implementing an interior point algorithm foe solving such
problems. An analysis for obtaining a stabilizing linear control law
from a distributed viewpoint has been made. By combining several
recent results, we have proved that the online computation of MPC
control laws for some special classes of large scale  systems can be
carried out with increased speed through a reduction of the number
of required flops. This, in combination with ever-increasing
distributed computing power that can be used for distributed
computation of an MPC  law suggests  us that MPC can be used now in
many large-scale applications where it has not been considered
applicable  before.

Further details regarding the efficient transmission of data between
subsystems and the implementation results for the interior point
method presented are omitted for lack of space.

{\small \textbf{Acknowledgements:} \; The research leading to these
results has received funding from: the European Union, Seventh
Framework Programme (FP7/2007--2013) EMBOCON under grant agreement
no 248940; CNCS-UEFISCDI (project TE, no. 19/11.08.2010); ANCS
(project PN II, no. 80EU/2010); Sectoral Operational Programme Human
Resources Development 2007-2013 of the Romanian Ministry of Labor,
Family and Social Protection through the Financial Agreements
POSDRU/89/1.5/S/62557. }

\thispagestyle{empty} \pagestyle{empty}
\end{document}